\newif\ifru
\newif\ifen
\newtheorem{theorem}{Theorem}
\newtheorem{Lemma}{Lemma}
\title{{\normalsize\tt\hfill\jobname.tex}\\
On strong law of large numbers for non identically distributed random variables
}
\author{I.V. Kozlov\footnote{M.V. Lomonosov Moscow State University; Institute for Information Transmission Problems (A.A.Kharkevich Institute); email: }, \; A.Yu. Veretennikov\footnote{Institute for Information Transmission Problems (A.A.Kharkevich Institute); RUDN University; email: alexander.veretennikov2011@ya.ru}
}
\title{{\normalsize\tt\hfill\jobname.tex}\\
Об усиленном  законе больших чисел для попарно независимых случайных величин
}
\author{И.В. Козлов\footnote{Московский государственный университет им. М.В. Ломоносова; Институт проблем передачи информации им. А.А. Харкевича; email: } \; А.Ю. Веретенников\footnote{Институт проблем передачи информации им. А.А. Харкевича; РУДН ; email: alexander.veretennikov2011@ya.ru}
}
\date{}
\begin{document}
\maketitle

\ifen
\begin{abstract}
\noindent

A new  version of a strong law of large numbers for a ``good'' pairwise independent sequence of random variables (r.v.'s) with a small part of ``bad'' dependent r.v.'s is proposed. The main goal is to relax the assumption on the existence of the expectation for each summand: the members of an ``infrequent'' part of the whole sequence may have moments of orders converging to zero.

\medskip

\noindent
Keywords: Strong law of large numbers; partial pairwise independence; partal dependence; partial absence of expectations.

\medskip

\noindent
MSC2020: 60F15

\end{abstract}
\fi

\ifru
\begin{abstract}
\noindent

Предложена новая версия усиленного закона больших чисел для ``хорошей'' последовательности случайных величин (с.в.) с небольшой частью ``плохих'' зависимых с.в. Основной целью являлось ослабление требования на существование математического ожидания для каждого слагаемого: члены ``разреженной'' части всей последовательности может иметь моменты порядков, сходящихся к нулю.

\medskip

\noindent
Ключевые слова: усиленный закон ольших чисел; попарная независимость; ччастичная зависимость; частичное отсутствие математических ожиданий.

\medskip

\noindent
MSC2020: 60F15

\end{abstract}
\fi

\ifen
\section{Introduction}
The Law of Large Numbers (LLN for what follows) is the first rigorously proved limit theorem in probability. The interest to any new its version is natural because the majority of  methods of parameter estimations and hypothesis testing is based on LLN, but also due to a philosophical importance of such theorems. The history of the LLN goes back to the work by Jakob Bernoulli in 1713, see \cite{Bernoulli},  then continued by de Moivre, Laplace, Chebyshev \cite{Chebyshev} et al. In the 20th century the strong form of an LLN was discovered by Borel \cite[chapter II, section 11]{Borel1909}, Cantelli  \cite[Lemma 2 \& formulae (28)-(29)]{Cantelli}, Mazurkiewicz  \cite{Mazurkiewicz}, Rajchman \cite{Rajchman32} (see also a recent exposition of the latter work in \cite{Chandra3}), which was brought to a nearly final form by Kolmogorov \cite{Kolmogorov}, yet further developed by Etemadi  \cite{Etemadi81}, Cs\"orgo, Tandori, and Totik \cite{Hungarians}, Landers and Rogge \cite{Rogge, Rogge2}, Chandra \cite{BoseChandra94, Chandra, Chandra2, Chandra_book, Chandra3}, among others. The history may be recalled in \cite{Seneta92, Seneta3, Shiryaev}.

The goal of the present note is to offer some further  extension for the case of pairwise independent random variables, which includes Kolmogorov's SLLN \cite[theorem 6.5.3]{Kolmogorov}, as well as the recent result in \cite{AA_AV24}. The matter is that a certain, not necessarily finite part of the summands may have infinite expectations with moments of orders, which may converge to zero. The most important technical steps relate to the calculi in \cite{Kolmogorov, MZ, Sawyer69}. 

The present paper consists of four sections: this introduction, the setting, the SLLN for partially pairwise independent random variables (theorem 
\ref{thm1}), the proof of theorem \ref{thm1}.
\fi

\ifen
\section{The setting} 
Let us consider a sequence $(X_n, n\ge 1)$ of pairwise independent random variables (r.v's) with finite expectations, for which without loss of generality the equalities 
\begin{equation}\label{eq2-1}
\mathsf E X_i = 0, \qquad i\ge 1,
\end{equation}
hold. It is also assumed that there exists $n_0 \ge 1$ such that 
\begin{equation}\label{CUI}
C(n_{0})
  := \int_{0}^{\infty}
       \sup_{n \ge n_{0}}
       \frac{1}{n}
       \sum_{k = n_{0}}^{n}
      \mathsf P\!\bigl(|X_{k}| > x\bigr)\,dx
    < \infty.
\end{equation}

Moreover, there is another sequence of r.v's $(Y_n)$, arbitrarily dependent and, in general, without finite expectations. Instead, it is assumed that there is a nonrandom sequence of values $(a_n)$, such that all $a_n>0$ and $a_n\downarrow 0, \, n\to\infty$, and $\mathsf E |Y_n|^{a_n}<\infty$; an additional technical condition on the sequence $(Y_n)$ is as follows: let 
\begin{equation}\label{defVn}
    V_n := |Y_n|^{a_n}, \quad \mathsf EV_n < \infty,
\end{equation}
and there exists a nonincreasing function $\bar G(t), \, t\ge0$ such that 
\begin{equation}\label{VnCG}
  \sup_{n\ge 1}\mathsf P\bigl(V_n>t\bigr)\le\bar G(t), \qquad
  \int_{0}^{\infty}\bar G(t)dt =: C_G <\infty.
\end{equation}

Now, the sequence of r.v's $(Z_n)$ is constructed as a mixture of $(X_n)$ and $(Y_n)$. More precisely, there is one more nonrandom sequence $(\alpha_n)$ taking values $0$ and $1$; denote $\varphi_n:= \sum_{k=1}^n \alpha_k$ and $\psi_n:= n - \varphi_n$, and, finally, define 
$$
Z_n:= Y_{\varphi_n} 1(\alpha_n = 1) + X_{\psi_n} 1(\alpha_n = 0).
$$
The law of large numbers (LLN in what follows) will be stated and proved for the sequence $(Z_n)$ under the standing condition that the sequence $(Y_n)$ is ``infrequent'', see what follows. Notice that recently a strong LLN was established in a likewise case under a more restrictive condition that all $a_n\equiv a \in (0,1)$, see \cite{AA_AV24}.

The condition of ``infrequency'' of the sequence $(Y_n)$ is as follows:

\begin{equation}\label{finan}
\limsup_{n\to\infty}\frac{\varphi(n)}{n^{a_n}}<\infty.
\end{equation}
Naturally, in order to guarantee that the sequence $(\varphi(n))$ may increase to infinity (otherwise it is not an interesting case), it is required that $n^{a_n} \to \infty$, which is equivalent to 
\begin{equation}\label{anlnn}
a_n \ln n \to \infty, \quad n\to\infty. 
\end{equation}
In words, $a_n$ goes to zero slow enough, slower than $(\ln n)^{-1}$, or, equivalently, $(a_n)^{-1}$ incresases slower than $\ln n$, that is, 
$(a_n)^{-1}= o(\ln n)$.
The latter condition will also be assumed, although, formally, the result will hold for a bounded sequence $(\varphi(n))$, too.

\begin{theorem}\label{thm1}
The assumptions (\ref{eq2-1})–(\ref{anlnn}) imply an SLLN 
\begin{equation}\label{thmSLLN}
\frac{1}{n}\sum_{k=1}^{n} Z_k \xrightarrow{\text{a.s.}} 0.
\end{equation}
\end{theorem}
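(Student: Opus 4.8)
The plan is to split the sum $\sum_{k=1}^n Z_k$ according to the two types of indices, writing
$$
\frac{1}{n}\sum_{k=1}^n Z_k = \frac{1}{n}\sum_{k=1}^{\psi_n} X_k + \frac{1}{n}\sum_{k=1}^{\varphi_n} Y_k,
$$
and to show that each of the two terms tends to $0$ a.s. For the $X$-part one would invoke the SLLN for pairwise independent r.v.'s with the integrability-type condition \eqref{CUI}: since $\psi_n \le n$ and $\psi_n/n \to 1$ (because $\varphi_n = o(n)$ by \eqref{finan} and \eqref{anlnn}), the average $\frac{1}{\psi_n}\sum_{k=1}^{\psi_n} X_k \to 0$ a.s. by the Kolmogorov–Etemadi–Chandra-type result that condition \eqref{CUI} is designed to trigger (truncation at level $k$, estimate of $\sum \mathrm{Var}(X_k 1(|X_k|\le k))/k^2$ via \eqref{CUI}, a Borel–Cantelli argument for the tails along $n$, then passing from subsequences to the full sequence by monotonicity of partial sums of truncated variables — exactly the ``calculi in \cite{Kolmogorov, MZ, Sawyer69}'' the introduction alludes to). Multiplying by $\psi_n/n \to 1$ leaves this contribution at $0$.

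The real work is the $Y$-part: one must show $\frac{1}{n}\sum_{k=1}^{\varphi_n}|Y_k| \to 0$ a.s. Here I would not use cancellation (the $Y_k$ need not even have first moments) but instead control $\max_{k\le \varphi_n}|Y_k|$ together with the count $\varphi_n$. The key estimate is
$$
\frac{1}{n}\sum_{k=1}^{\varphi_n}|Y_k| \le \frac{\varphi_n}{n}\max_{k\le \varphi_n}|Y_k|,
$$
and since $\varphi_n/n \to 0$ it suffices to beat the growth of the maximum. Because $\mathsf E|Y_k|^{a_k} = \mathsf E V_k < \infty$ with the uniform tail bound \eqref{VnCG}, one gets $\mathsf P(|Y_k| > t) = \mathsf P(V_k > t^{a_k}) \le \bar G(t^{a_k})$. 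The strategy is a Borel–Cantelli argument: fix $\varepsilon>0$ and bound $\mathsf P(|Y_k| > \varepsilon n_k)$ for a suitable subsequence, where the relation between $k$ and $n$ is governed by the infrequency condition \eqref{finan}, i.e. $k = \varphi(n) \le c\, n^{a_n}$, so $n \ge (k/c)^{1/a_n}$, which by \eqref{anlnn} grows faster than any power of $k$. Thus $|Y_k|$ only has to be smaller than something growing like $\varepsilon (k/c)^{1/a_n}$, and the probability of the complement is at most $\bar G\big((\varepsilon (k/c)^{1/a_n})^{a_k}\big)$; using $a_k \downarrow 0$ and the integrability $\int_0^\infty \bar G < \infty$ (which forces $\bar G(t) = o(1/t)$ along a subsequence, and more precisely makes $\sum_k \bar G(\text{something growing})$ summable after a change of variables), one concludes that a.s. $|Y_k|/n \to 0$ in the relevant regime, hence $\frac{\varphi_n}{n}\max_{k\le\varphi_n}|Y_k| \to 0$ a.s.

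The main obstacle I anticipate is making the last Borel–Cantelli step fully rigorous: one needs the exponents $1/a_n$ (with $n \asymp (k/c)^{1/a_k}$, a transcendental relation) to interact correctly with the slowly-varying-at-infinity decay of $\bar G$ so that the resulting series $\sum_k \mathsf P(|Y_k| > \varepsilon n)$ converges — this is where \eqref{anlnn}, i.e. $a_n \ln n \to \infty$, is exactly the threshold, and getting the quantifiers and the monotonicity of $\bar G$ lined up (rather than an off-by-an-epsilon failure) is delicate. A secondary point is handling the indices $n$ for which $\alpha_n$ switches, i.e. passing between the counting functions $\varphi_n,\psi_n$ and the ``internal'' indices of the $X$- and $Y$-sequences without losing track of which partial sums are being compared; this is bookkeeping but must be done carefully so that the subsequence arguments for the two parts are compatible. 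Once both pieces are in place, \eqref{thmSLLN} follows by adding them.
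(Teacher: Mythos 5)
Your decomposition of $\frac1n\sum_{k=1}^nZ_k$ into an $X$-part and a $Y$-part is exactly the paper's (its Lemmas~\ref{lem:X} and~\ref{lem:Y}), and your treatment of the $X$-part --- Etemadi's theorem under the Ces\`aro-type condition (\ref{CUI}), together with $\psi_n/n\to1$ --- matches Lemma~\ref{lem:X}. The genuine gap is in the $Y$-part. The bound $\frac1n\sum_{k\le\varphi_n}|Y_k|\le\frac{\varphi_n}{n}\max_{k\le\varphi_n}|Y_k|$ is too lossy to close the argument: the strongest conclusion a Borel--Cantelli argument can extract from $\mathsf P(|Y_k|>t)\le\bar G(t^{a_k})$ and $\int_0^\infty\bar G<\infty$ is that a.s.\ eventually $|Y_k|\le(\delta k)^{1/a_k}$ (summability of $\bar G(t_k^{a_k})$ essentially forces $t_k^{a_k}\gtrsim k$, since integrability of $\bar G$ only gives summability along at least linearly growing arguments). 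Via (\ref{finan}) this translates into $\max_{k\le\varphi_n}|Y_k|=O(\varepsilon n)$ and nothing better, so your product is $O(\varepsilon\varphi_n)$, which \emph{diverges} whenever $\varphi_n\to\infty$; the step ``$|Y_k|/n\to0$, hence $\frac{\varphi_n}{n}\max_{k\le\varphi_n}|Y_k|\to0$'' is a non sequitur. To make the max route work you would need $\max_{k\le\varphi_n}|Y_k|=o(n/\varphi_n)$, which requires summability of $\bar G(\delta k^{1-a_k})$ with $k^{a_k}\to\infty$ by (\ref{anlnn}); this fails already for $\bar G(t)\asymp t^{-1}(\ln t)^{-2}$ and $a_k=3\ln\ln k/\ln k$, i.e.\ it amounts to an extra moment assumption on $V_k$ that is not available.

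The paper avoids this by never separating the count from the maximum: it truncates, $\tilde V_n=\min(V_n,n)$, proves $\mathsf E\sum_n(\tilde V_n/n)^{p}\le C_G(2p-1)/(p-1)$ uniformly enough in $p>1$ (a Sawyer-type calculus) to allow a diagonal choice of varying exponents approximating $p_n=1/a_n$, and thereby obtains the a.s.\ convergence of the weighted series $\sum_n|Y_n|/n^{1/a_n}$. Kronecker's lemma then gives $\sum_{k\le m}Y_k=o\bigl(m^{1/a_m}\bigr)$, and (\ref{finan}) ($\varphi_n\lesssim n^{a_n}$, hence $\varphi_n^{1/a_{\varphi_n}}\lesssim n$) converts this into the required $o(n)$ bound for the $Y$-part. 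The missing idea in your proposal is precisely this weighted-series/Kronecker mechanism: one must control the whole sum $\sum_{k\le\varphi_n}|Y_k|$ through a convergent series with weights $k^{-1/a_k}$, rather than through individual tail events, because Borel--Cantelli on single tails discards exactly the factor $\varphi_n$ that then destroys the estimate.
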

\fi

\ifen
\section{Proof of Theorem}
We will need two lemmata. One is a simple analogue of N. Etemadi's theorem from  \cite{Etemadi81} under relaxed Cesàro UI condition, the other is an advanced analogue of the a calculus  from \cite{Sawyer69}, see also \cite{Korchevsky}.

\begin{Lemma}[for $(X_n)$]\label{lem:X}
Let the assumptions (\ref{eq2-1})–-(\ref{CUI}) be satisfied. Then
\begin{equation}\label{le1-eq1}
\frac{1}{n}\sum_{k=1}^{n-\varphi(n)} X_k \xrightarrow{\text{a.s.}} 0,
\qquad n\to\infty.
\end{equation}
\end{Lemma}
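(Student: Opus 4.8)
The plan is to adapt Etemadi's truncation argument for pairwise independent r.v.'s to the Cesàro uniform integrability hypothesis (\ref{CUI}), keeping careful track of the fact that we sum only the first $n-\varphi(n)$ of the $X$'s. First I would reindex: write $m = \psi(n) = n - \varphi(n)$, and observe that since $(\varphi(n))$ satisfies (\ref{finan}) with $n^{a_n}\to\infty$, we have $m/n \to 1$, so it suffices to prove $\frac1m\sum_{k=1}^m X_k \to 0$ a.s. along the full integer sequence and then transfer. (One must check $m$ runs through all large integers, or else handle the skipped values by a maximal-increment bound; this is minor since consecutive $\psi(n)$ differ by $0$ or $1$.) Thus the real content is an Etemadi-type SLLN for the sequence $(X_k,\ k\ge n_0)$ under (\ref{eq2-1})–(\ref{CUI}).

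For that, truncate: set $X_k' = X_k\,1(|X_k|\le k)$ and $X_k'' = X_k - X_k'$. The tail sum $\sum_k \mathsf P(|X_k|>k)$ is controlled by (\ref{CUI}): indeed $\sum_{k\ge n_0}\mathsf P(|X_k|>k)$ is dominated, after an Abel/summation-by-parts manipulation, by a constant multiple of $C(n_0) = \int_0^\infty \sup_{n\ge n_0}\frac1n\sum_{k=n_0}^n\mathsf P(|X_k|>x)\,dx$, so by Borel–Cantelli $X_k = X_k'$ eventually a.s. and the $X_k''$ part is a.s. a finite sum, hence negligible after division by $n$. Next, $\frac1n\sum_{k\le n}\mathsf E X_k' = -\frac1n\sum_{k\le n}\mathsf E X_k'' \to 0$: again rewrite $\mathsf E|X_k''| = \int_0^\infty \mathsf P(|X_k|>\max(x,k))\,dx \le \int_0^\infty \mathsf P(|X_k|>x)\,dx$ restricted suitably, and Cesàro-average using (\ref{CUI}) plus the dominated-convergence/Toeplitz fact that $\int_0^\infty\sup_{n\ge n_0}\frac1n\sum_{k=n_0}^n\mathsf P(|X_k|>x)\,dx<\infty$ forces $\frac1n\sum_{k=n_0}^n\mathsf P(|X_k|>x)\to 0$ for a.e.\ $x$, so its $x$-integral (the mean of $|X_k''|$, Cesàro-averaged) vanishes. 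So it remains to show $\frac1n\sum_{k\le n}(X_k' - \mathsf E X_k')\to 0$ a.s.

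The centered truncated part is handled by Etemadi's classical pairwise-independence scheme: because pairwise independence suffices for $\mathrm{Var}(\sum) = \sum\mathrm{Var}$, one bounds $\sum_k k^{-2}\mathrm{Var}(X_k') \le \sum_k k^{-2}\mathsf E (X_k')^2 = \sum_k k^{-2}\int_0^k 2x\,\mathsf P(|X_k|>x)\,dx$ (up to the usual layer-cake identity), swap the order of summation to get $\sum_x 2x\,\mathsf P(|X_k|>x)\sum_{k\ge \max(x,n_0)}k^{-2} \lesssim \sum_x \mathsf P(|X_k|>x)$, and again dominate the resulting average by $C(n_0)$ via the $\sup$-over-$n$ in (\ref{CUI}). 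This gives a.s.\ convergence along a subsequence $n_j$ with, say, $n_j \sim j^{?}$ growing polynomially — actually along $n_j = \lfloor \rho^j\rfloor$ is the cleanest — by Chebyshev plus Borel–Cantelli, and then the monotonicity of the partial sums of the nonnegative pieces $X_k'{}^{\pm}$ (Etemadi's trick of splitting into positive and negative parts) fills the gaps between consecutive $n_j$, using $n_{j+1}/n_j\to\rho$ arbitrarily close to $1$.

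The main obstacle, and the place to be careful, is precisely the reduction of the three averaged tail-probability estimates — $\sum_k\mathsf P(|X_k|>k)$, $\frac1n\sum_k\mathsf E|X_k''|$, and $\sum_k k^{-2}\mathsf E(X_k')^2$ — to the single quantity $C(n_0)$ in (\ref{CUI}): each requires interchanging a sum/integral over the truncation level $x$ with the sum over $k$ and then recognizing the inner sum as (a constant times) a Cesàro average $\frac1N\sum_{k=n_0}^N\mathsf P(|X_k|>x)$ that is bounded uniformly in $N$ by the integrand of $C(n_0)$. The Abel summation needed for $\sum_k k^{-2}(\cdots)$ (to convert $\sum_{k\le N}c_k$ with $c_k = \mathsf P(|X_k|>x)$ into an integral of the partial sums against $d(k^{-2})$, i.e.\ $\frac1N\sum c_k$) is the one genuinely fiddly computation; everything else is Etemadi's argument essentially verbatim. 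I would isolate that interchange as a one-line sublemma ("for any nonnegative array $(c_k)$ and any $\beta>1$, $\sum_{k\ge n_0} k^{-\beta}\sum_{j=n_0}^k c_j \le \frac{\beta}{\beta-1}\sum_{k\ge n_0}k^{-\beta+1}\cdot\sup_{N}\frac1N\sum_{j=n_0}^N c_j$", and similar for $\beta=1$ against $\log$) and apply it three times.
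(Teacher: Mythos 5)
Your overall strategy coincides with the paper's: the paper's own proof of Lemma~1 consists only of the observation that the first $n_0-1$ terms are negligible after division by $n$, plus a citation of Theorem~1 of \cite{Etemadi81}, so your reconstruction of Etemadi's truncation scheme under (\ref{CUI}) is exactly what that citation is hiding (and is genuinely needed, since Etemadi's theorem is stated for identically distributed variables). Your reindexing via $\psi(n)=n-\varphi(n)$, which increases to infinity in unit steps so that the sum runs over a genuine subsequence with $\psi(n)/n\to1$, is correct, and the three reductions of $\sum_k\mathsf P(|X_k|>k)$, of the averaged truncation bias, and of $\sum_k k^{-2}\mathsf E(X_k')^2$ to the single constant $C(n_0)$ all go through along the lines you indicate.

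Two points need repair. First, your justification of $\frac1n\sum_{k}\mathsf E X_k'\to0$ invokes the claim that integrability of the supremum in (\ref{CUI}) ``forces $\frac1n\sum_{k=n_0}^n\mathsf P(|X_k|>x)\to0$ for a.e.\ $x$''; this is false (take identically distributed $X_k$). The correct reason is that $\mathsf E|X_k''|$ involves $\mathsf P(|X_k|>x)$ only for $x\ge k$ (plus the boundary term $k\,\mathsf P(|X_k|>k)$, handled by Kronecker's lemma), so after interchanging sum and integral the integrand is $\frac1n\sum_{k=n_0}^{\min(n,\lfloor x\rfloor)}\mathsf P(|X_k|>x)\le\min(x/n,1)\,F(x)$ with $F$ the integrand of (\ref{CUI}); this tends to $0$ pointwise and is dominated by $F\in L^1$. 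Second, the gap-filling between $n_j=\lfloor\rho^j\rfloor$ is not ``Etemadi verbatim'': in the i.i.d.\ case the error term is $(\rho-1)\mathsf E X^+$, whereas here one must show that $\limsup_j n_j^{-1}\sum_{k=n_j+1}^{n_{j+1}}\mathsf E X_k'^{+}$ becomes small as $\rho\downarrow1$, and mere Ces\`aro boundedness of the means does not give this (all the mass of the means could sit in the block $(n_j,n_{j+1}]$). It does follow from (\ref{CUI}) via $\sum_{k=n_j+1}^{n_{j+1}}\mathsf P(|X_k|>x)\le\min\bigl(n_{j+1}-n_j,\,n_{j+1}F(x)\bigr)$ and dominated convergence, since $\int_0^\infty\min(\rho-1,\,2F(x))\,dx\to0$ as $\rho\downarrow1$; this is precisely where the supremum inside the integral in (\ref{CUI}) is indispensable, and it should be made explicit rather than absorbed into ``Etemadi's argument essentially verbatim''.
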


\begin{proof}
First of all, we have for each $n_0$, 
\[
\frac{1}{n}\sum_{k=1}^{n_0-1} X_k \xrightarrow{\text{a.s.}} 0.
\qquad n\to\infty,
\]
Hence, it suffices to verify \[
\frac{1}{n}\sum_{k=n_0}^{n-\varphi(n)} X_k \xrightarrow{\text{п.н.}} 0,
\qquad n\to\infty.
\]
The latter follows from theorem 1 in \cite{Etemadi81}.
\end{proof}

\begin{Lemma}[для $(Y_n)$]\label{lem:Y}
Let assumptions  (\ref{defVn}) -- (\ref{finan})  be satisfied. Then
\begin{equation}\label{le2-eq1}
\frac{1}{n}\sum_{k=1}^{\,\kappa(n)}Y_k \xrightarrow{\text{a.s.}} 0,
\qquad n\to\infty.
\end{equation}
\end{Lemma}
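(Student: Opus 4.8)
\textbf{Proof plan for Lemma \ref{lem:Y}.}

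The plan is to reduce the almost sure convergence $\frac1n\sum_{k=1}^{\kappa(n)} Y_k \to 0$ to a Borel–Cantelli argument after a suitable truncation, exploiting that $\kappa(n)$ grows no faster than $n^{a_n}$ by the infrequency condition (\ref{finan}). First I would fix notation: write $M := \limsup_n \varphi(n)/n^{a_n} < \infty$, so that $\kappa(n) \le C\, n^{a_n}$ for all large $n$ and some constant $C$; in particular the number of $Y$-summands appearing in the first $n$ terms is at most of order $n^{a_n} = o(n)$. The natural truncation level is dictated by the single integrability bound available, namely $\mathsf E V_k = \mathsf E|Y_k|^{a_k} < \infty$ with the uniform tail control (\ref{VnCG}). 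So I would set $Y_k' := Y_k\, 1(|Y_k| \le b_k)$ for a threshold $b_k$ to be chosen, and control separately: (i) the "large-jump" part via $\sum_k \mathsf P(|Y_k| > b_k)$ being summable, which by (\ref{VnCG}) amounts to $\sum_k \bar G(b_k^{a_k}) < \infty$; (ii) the contribution of $\frac1n\sum_{k \le \kappa(n)} \mathsf E Y_k'$, a deterministic term; (iii) the fluctuation of $\frac1n\sum_{k\le\kappa(n)}(Y_k' - \mathsf E Y_k')$.

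The key quantitative point is that $|Y_k| \le b_k$ forces $|Y_k| \le b_k = (b_k^{a_k})^{1/a_k}$, and since $a_k \to 0$, even a modest truncation on $V_k = |Y_k|^{a_k}$ produces a huge truncation on $|Y_k|$ itself; conversely $|Y_k'| \le b_k$ is a very weak pointwise bound on the summand, which is exactly why one must keep the number of such summands small. Concretely I would aim for a threshold of the form $b_k^{a_k} \asymp k^{1+\varepsilon}$ (so $b_k \asymp k^{(1+\varepsilon)/a_k}$), so that $\sum_k \bar G(b_k^{a_k}) \le \sum_k \bar G(k^{1+\varepsilon})$ converges because $\int_0^\infty \bar G(t)\,dt = C_G < \infty$ and $\bar G$ is nonincreasing (a Cauchy-condensation / integral-comparison step). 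With this choice the truncated summand satisfies $|Y_k'| \le k^{(1+\varepsilon)/a_k}$; because of the infrequency bound the indices $k$ of $Y$-summands that show up inside the first $n$ terms are at most $\kappa(n) \le C n^{a_n}$, and for such $k$ one has $k^{(1+\varepsilon)/a_k} \le (Cn^{a_n})^{(1+\varepsilon)/a_k}$. The delicate bookkeeping is that $a_k$ is evaluated at the "$Y$-clock" index $k \le \kappa(n)$, not at $n$, so I would use monotonicity $a_k \downarrow$ together with $a_{\kappa(n)} \ge a_n$ (as $\kappa(n) \le n$) to convert all exponents back to $a_n$; since $a_n \ln n \to \infty$ by (\ref{anlnn}), the resulting bound on $\frac1n\sum_{k\le\kappa(n)}|Y_k'|$ is $o(1)$ deterministically along a subsequence, and one upgrades to the full sequence by monotonicity of partial sums of $|Y_k'|$ in the usual Etemadi-style sandwich.

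For the fluctuation term (iii) I would invoke Chebyshev along a geometric subsequence $n_j = \lfloor \rho^j\rfloor$: $\mathsf{Var}\bigl(\sum_{k\le\kappa(n_j)}Y_k'\bigr)$ need not split into a sum of variances because the $Y$'s are arbitrarily dependent, so here the bound $|Y_k'|\le b_k$ must be used crudely, $\mathsf{Var}(\sum_{k\le\kappa(n_j)}Y_k') \le (\sum_{k\le\kappa(n_j)}b_k)^2$, which is again controlled by the same calculation as in step (ii); one shows $n_j^{-1}\sum_{k\le\kappa(n_j)}b_k \to 0$ fast enough that $\sum_j \mathsf P(|n_j^{-1}\sum_{k\le\kappa(n_j)}(Y_k'-\mathsf E Y_k')| > \delta) < \infty$, hence by Borel–Cantelli the fluctuation vanishes a.s.\ along $(n_j)$, and the standard monotone interpolation between $n_j$ and $n_{j+1}$ (using $\kappa$ nondecreasing and $n_{j+1}/n_j \to \rho$) closes the gap, finally letting $\rho \downarrow 1$. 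The main obstacle I anticipate is precisely this exponent bookkeeping in step (ii)–(iii): one must verify that the truncation threshold $b_k$ can simultaneously be taken small enough that $\frac1n\sum_{k\le\kappa(n)} b_k \to 0$ (which needs $a_n \ln n \to \infty$ in an essential, quantitative way, not merely $a_n\to 0$) and large enough that $\sum_k \bar G(b_k^{a_k})<\infty$; reconciling these two requirements is the heart of the lemma, and is where condition (\ref{anlnn}) — equivalently $(a_n)^{-1} = o(\ln n)$ — gets used.
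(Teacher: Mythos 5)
Your three-part reduction (summable tails, deterministic bound on the truncated sum, fluctuations) is reasonable in outline, but the specific threshold $b_k^{a_k}\asymp k^{1+\varepsilon}$ makes step (ii) fail, and this is exactly the point you yourself identify as the heart of the lemma. Follow your own estimate: each truncated summand is controlled only by $b_k=k^{(1+\varepsilon)/a_k}\le (C n^{a_n})^{(1+\varepsilon)/a_k}\le C^{(1+\varepsilon)/a_n}\,n^{1+\varepsilon}$ for $k\le\kappa(n)$, so even a \emph{single} such term divided by $n$ is of order $n^{\varepsilon}$, and summing $\kappa(n)\asymp n^{a_n}$ of them gives $\asymp n^{a_n+\varepsilon}\to\infty$, not $o(1)$. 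This is not an artifact of a crude bound: take $a_n=1/\ln\ln n$ (which satisfies (\ref{anlnn})) and $\kappa(n)\asymp n^{a_n}$ (permitted by (\ref{finan})); then for $m=\kappa(n)$ one has $a_m/a_n\to 1$, hence $b_m=m^{(1+\varepsilon)/a_m}=n^{(1+\varepsilon)(1+o(1))}\ge n^{1+\varepsilon/2}$ eventually, so $n^{-1}\sum_{k\le\kappa(n)}b_k\to\infty$. Condition (\ref{anlnn}) cannot repair this: $a_n\ln n\to\infty$ controls factors like $C^{1/a_n}=n^{o(1)}$ but not $n^{\varepsilon}$. The same blow-up destroys step (iii), since with arbitrarily dependent $Y$'s your variance bound $\bigl(\sum_{k\le\kappa(n_j)}b_k\bigr)^2$ is the square of a quantity already larger than $n_j$. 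The structural reason is that any extra power $k^{\varepsilon}$ allowed in the truncation of $V_k=|Y_k|^{a_k}$ gets raised to the exponent $1/a_k\to\infty$ when converted back to $|Y_k|$, and then overwhelms $n\ge(\kappa(n)/C)^{1/a_n}$; so the truncation level for $V_k$ must be essentially linear in $k$, and even then a termwise pointwise (or first-moment) bound is too coarse.

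The paper resolves this by a genuinely different device: it truncates at the linear level, $\tilde V_n=\min(V_n,n)$ (the discarded part being handled exactly by your step (i), via $\sum_n\bar G(n)<\infty$), and then, instead of bounding terms individually, computes the expectation of the entire series, $\mathsf E\sum_n(\tilde V_n/n)^p\le C_G(2p-1)/(p-1)$ uniformly in $p>1$ --- a Fubini interchange in which the tail $\sum_{n\ge s} n^{-p}\asymp s^{1-p}/(p-1)$ produces the cancellation that a term-by-term estimate misses. A block/diagonal construction then replaces the fixed $p$ by the variable exponent $p_n=1/a_n$, yielding $\sum_n |Y_n|\,n^{-1/a_n}<\infty$ a.s., after which Kronecker's lemma and (\ref{finan}) give (\ref{le2-eq1}). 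To salvage your plan you would need to replace the pointwise bound $|Y_k'|\le b_k$ by an expectation of the whole normalized series with this summation-over-$n$ cancellation, at which point you would essentially be rederiving the paper's argument.
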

\begin{proof} Let $\tilde V_n := \min\{V_n, n\}$. Let us show that for any $p>1$, 
\[
  \mathsf E\sum_{n=3}^{\infty}\frac{\tilde V_n^{p}}{n^{p}} < \infty.
\]
We have, 
\begin{align*}
& \mathsf E\tilde V_n^{p}
  =\int_{0}^{n^{p}}
    \mathsf  P\bigl(\tilde V_n^p>u\bigr)du
  =\int_{0}^{n} p s^{p-1} \mathsf P(V_n>s)ds
   + n^{p}  \mathsf P(V_n>n)
   \\\\
&  \le
  \int_{0}^{n} p s^{p-1}\bar G(s)ds
  + n^{p}\bar G(n).
\end{align*}
Therefore, 
\[
  \sum_{n=3}^{\infty}\frac{\mathsf E\tilde V_n^p}{n^{p}}
  \le
  A+B,
\]
where it was denoted
\[
  A:=\sum_{n=3}^{\infty}\frac1{n^{p}}
      \int_{0}^{n}p s^{p-1}\bar G(s)ds,
  \qquad
  B:=\sum_{n=3}^{\infty}\bar G(n).
\]

Let us evaluate for $p>1$ the remainder of the series $\sum_{n=m}^{\infty}n^{-p}$:
\[
\sum_{n=m}^{\infty} \frac{1}{n^p} \le \int_{m-1}^{\infty} \frac{1}{x^p}\,dx = \frac{(m-1)^{1-p}}{p - 1}, \qquad p > 1,\ m \ge 2.
\]
Hence, 
\begin{align*}
&  A =\sum_{n=3}^{\infty}\frac1{n^{p}}\sum_{i=1}^{n} \int^{i}_{i-1}s^{p-1} p \bar G(s) ds = \sum_{i=1}^{\infty} \left( \int_{i-1}^{i} s^{p-1} p \, \bar{G}(s) \, ds \right) \sum_{n=\max(i,3)}^{\infty} \frac{1}{n^p}
 \\\\
&  \le \sum_{i=1}^{\infty} \left( \int_{i-1}^{i} i^{p-1} p \, \bar{G}(s) \, ds \right) \sum_{n=\max(i,3)}^{\infty} \frac{1}{n^p} = \Bigg[\sum_{i=3}^{\infty} \left( \int_{i-1}^{i} i^{p-1} p \, \bar{G}(s) \, ds \right) \sum_{n=i}^{\infty} \frac{1}{n^p} 
 \\\\
& +  \left( \int_{1}^{2} 2^{p-1} p \, \bar{G}(s) \, ds \right) \sum_{n=3}^{\infty} \frac{1}{n^p} + \left( \int_{0}^{1} p \, \bar{G}(s) \, ds \right) \sum_{n=3}^{\infty} \frac{1}{n^p}\Bigg]
 \\\\
&    \le \Bigg[\sum_{i=3}^{\infty} \left( \int_{i-1}^{i} i^{p-1} p \, \bar{G}(s) \, ds \right) \frac{(i-1)^{1-p}}{p - 1} + \left( \int_{1}^{2} 2^{p-1} p \, \bar{G}(s) \, ds \right) \frac{(2)^{1-p}}{p - 1}
 \\\\
& + \left( \int_{0}^{1} p \, \bar{G}(s) \, ds \right) \frac{(2)^{1-p}}{p - 1}\Bigg]
\le \frac{p}{p-1}\int^{\infty}_{0} \bar G(s)ds \le \frac{p}{p-1} C_G.
\end{align*}
Since the function $\bar G$ is non increasing, we get 
\(\displaystyle  \bar G(n)\le\int_{n-1}^{n}\bar G(t)dt\). 
So, 
\[
B\le\int_{2}^{\infty}\bar G(t)dt< C_G.
\]
Thus, we obtain, 
\[
  \mathsf E\sum_{n=3}^{\infty}\frac{\tilde V_n^{p}}{n^{p}}\le C_G\Bigg(\frac{p}{p-1}+1\Bigg) = C_G \Bigg(\frac{2p-1}{p-1}\Bigg),
  \qquad p>1.
\]

Let $p_k:=1/a_k>1$. Then
\[
 \mathsf  E\sum_{n=3}^{\infty}\frac{\tilde V_n^{p_k}}{n^{p_k}} \le C_G \Bigg(\frac{2p_k-1}{p_k-1}\Bigg),
  \quad\forall k\in\mathbb N.
\]

Let us choose a sequece of numbers $\{N_k\}_{k \in \mathbb N}$ so as $N_{k+1} \ge N_k+1$ and
\[
 \mathsf E\sum_{n=N_k+1}^{\infty}\frac{\tilde V_n^{p_k}}{n^{p_k}}
  <\frac1{k^{2}}.
\]
Such a sequence does exist because each series converges and its remainder goes to zero. Let 
\[
  q(n):=p_k,\quad \text{if} \quad N_k < n\le N_{k+1}.
\]
Then we have, 
\[
 \mathsf E\sum_{n=3}^{\infty}\frac{\tilde V_n^{q(n)}}{n^{q(n)}}
  \le
  \underbrace{ \mathsf E\sum_{n=3}^{N_1}\frac{\tilde V_n^{p_1}}{n^{p_1}}}_{C(p_1)}
  +\sum_{k=1}^{\infty}\frac1{k^{2}}
  <C+\frac{\pi^{2}}{6}<\infty.
\]
So, 
\[
   \mathsf  E\sum_{n=3}^{\infty}\frac{\tilde V_n^{q(n)}}{n^{q(n)}} < \infty
\]
Since $\frac{\tilde V_n}{n} \le 1$ and $q(n)\le p_n$, it follows that  $\displaystyle \Big[\frac{\tilde V_n}{n}\Big]^{q(n)} \ge \Big[\frac{\tilde V_n}{n}\Big]^{p_n}$. Therefore,
\[
 \mathsf E\sum_{n=3}^{\infty}\frac{\tilde V_n^{p_n}}{n^{p_n}} \le  \mathsf  E\sum_{n=3}^{\infty}\frac{\tilde V_n^{q(n)}}{n^{q(n)}} < \infty
\]
This implies convergence a.s. 
\[
  \sum_{n=3}^{\infty}\frac{\tilde V_n^{p_n}}{n^{p_n}}<\infty
  \quad(\text{a.s.}).
\]
Recall that $\tilde V_n=\min(V_n,n)$. Therefore, we get, 
\[
  \sum_{n=1}^{\infty}\frac{V_n^{p_n}}{n^{p_n}}
  =\sum_{n=1}^{\infty}\frac{|Y_n|^{a_n p_n}}{n^{p_n}}<\infty
  \quad(\text{a.s.}).
\]
Now, let $p_n=1/a_n$. Then we obtain 
\[
  \sum_{n=1}^{\infty}\frac{|Y_n|^{a_n/a_n}}{n^{1/a_n}} = \sum_{n=1}^{\infty}\frac{|Y_n|}{n^{1/a_n}}<\infty
  \quad(\text{a.s.}).
\]
By virtue of the Kronecker lemma (see, for example, \cite[Lemma IV.3.2]{Shiryaev2}), 
\[
\frac{1}{n^{\frac1{a(n)}}} \sum_{k=1}^{n}Y_k \xrightarrow{\text{a.s.}} 0, \quad n\xrightarrow{}\infty
\]
The lemma is proved. 
\end{proof}
\begin{proof}[Proof of theorem]
The goal is to show convergence
\[
  \mathsf{P}\Bigl(\frac{1}{n}\sum_{k=1}^n Z_k \to 0,\; n\to\infty\Bigr) = 1.
\]
It follows from (\ref{le1-eq1}) and (\ref{le2-eq1}). The theorem is proved.
\end{proof}
\fi

\ifen
\section*{Acknowledgements}
For the second author this work was supported by the Foundation for the Advancement of Theoretical Physics and Mathematics ``BASIS''.

\fi

\ifru
\section*{Благодарности}
For the second author this work was supported by the Foundation for the advancement of theoretical physics and mathematics ``BASIS''.

\fi

\ifen

\fi

\ifru

\fi

\end{document}